\newtheorem{prethm}{{\bf Theorem}}
\newenvironment{thm}{\begin{prethm}\sl{\hspace{-0.5
               em}{\bf.}}}{\end{prethm}}
\newtheorem{prepro}[prethm]{{\bf Proposition}}
\newtheorem{prelem}[prethm]{{\bf Lemma}}
\newenvironment{lem}{\begin{prelem}\sl{\hspace{-0.5
               em}{\bf.}}}{\end{prelem}}
\newtheorem{predeff}[prethm]{{\bf Definition}}
\newtheorem{precor}[prethm]{{\bf Corollary}}
\newenvironment{cor}{\begin{precor}\sl{\hspace{-0.5
               em}{\bf.}}}{\end{precor}}
\newtheorem{preconj}[prethm]{{\bf Conjecture}}
\newtheorem{preremark}[prethm]{{\bf Remark}}
\newenvironment{remark}{\begin{preremark}\rm{\hspace{-0.5
               em}{\bf.}}}{\end{preremark}}
\newtheorem{preexample}[prethm]{{\bf Example}}
\newtheorem{preproof}{{\bf\textsf{Proof.}}}
\newenvironment{proof}[1]{\begin{preproof}{\rm
               #1}\hfill{$\Box$}}{\end{preproof}}
\newcommand{\la}{\lambda}
\newcommand{\x}{{\bf x}}
\newcommand{\mul}{{\rm mult}}
\newcommand{\f}{{\cal F}}
\newcommand{\cp}{{\rm CP}}
\newcommand{\sg}{\leqslant}
\title{Cographs: Eigenvalues and Dilworth Number}
\author{{\sc Ebrahim Ghorbani} \\[.3cm]
{\sl Department of Mathematics, K. N. Toosi University of Technology,}\\
{\sl P. O. Box 16315-1618, Tehran, Iran}\\
{\sl School of Mathematics, Institute for Research in Fundamental
Sciences (IPM),}\\
{\sl P. O. Box 19395-5746, Tehran, Iran}
\\[.3cm]
$\mathsf{e\_ghorbani@ipm.ir}$}
\begin{document}
\maketitle

\vspace{5mm}

\begin{abstract}
A cograph is a simple graph which contains no path on 4 vertices as an induced subgraph.
The vicinal preorder on the vertex set of a graph is defined in terms of inclusions among the neighborhoods of vertices.
The minimum number of chains with respect to the vicinal preorder required to cover the vertex set of a graph $G$ is called the  Dilworth number of $G$.
We prove that for any cograph $G$, the multiplicity of any eigenvalue $\la\ne0,-1$,
does not exceed the Dilworth number of $G$ and  show that this bound is tight.
G. F. Royle [The rank of a cograph,   Electron. J. Combin. 10 (2003), Note 11] proved that if a cograph $G$ has no pair of vertices with the same neighborhood, then $G$ has no 0 eigenvalue, and asked if beside cographs,  there are any other natural classes of graphs for which this property holds. We give a partial answer to this question by showing that an $H$-free family of graphs has this property  if and only if it is a subclass of the family of cographs.
 A similar result is also shown to hold for the $-1$ eigenvalue.

\vspace{5mm}
\noindent {\bf Keywords:}  Cograph,  Eigenvalue, Dilworth number, Threshold graph \\[.1cm]
\noindent {\bf AMS Mathematics Subject Classification\,(2010):}   05C50, 05C75
\end{abstract}

\vspace{5mm}

\section{Introduction}

A cograph is a simple graph which contains no path on four vertices as an induced subgraph.
The family of cographs is the
smallest class of graphs that includes the single-vertex graph and is closed under
complementation and disjoint union. This property justifies the name
`cograph' standing for `complement reducible graph' which was coined in \cite{clb}.
However, this family of graphs was initially defined under different names \cite{k1,k2,j,l,se,su} and since then has been intensively studied.
It is well known that any cograph has a canonical tree representation, called a cotree.
This tree decomposition scheme of cographs is a particular case of the modular decomposition \cite{g} that applies to arbitrary graphs.
Partly because of this property, cographs are interesting from the algorithmic point of view (see \cite[p.~175]{bls}).
As pointed out in \cite{sa}, cographs have numerous applications in areas like parallel computing \cite{noz} or even biology \cite{gkbc} since they can be used to model series-parallel decompositions. For an account on properties of cographs see \cite{bls}.

Cographs have also been studied from an algebraic point of view. Based on a computer search, Sillke \cite{si} conjectured that
 the rank of the adjacency matrix of any cograph is equal to the number of distinct
non-zero rows in this matrix. The conjecture was proved by Royle \cite{roy}. Since then alternative proofs and extensions of this result have appeared \cite{bss,chy,htw,sa}. Furthermore, in \cite{jtt1} an algorithm is introduced for locating eigenvalues of cographs in a given interval.
   In \cite{gh}, we present a new characterization of cographs; namely a graph $G$ is a cograph if and only if no induced subgraph of $G$ has an eigenvalue in the interval $(-1,0)$.  In \cite{gh}, it is also shown that  the multiplicity of any eigenvalue of a cograph $G$ does not exceed
the total number of duplication and coduplication classes of $G$ (see Section~\ref{pre} for definitions) which is not greater that
the sum of the multiplicities of $0$ and $-1$ as eigenvalues of $G$.

In this paper we explore further properties of the eigenvalues  (of the adjacency matrix) of a cograph.
 We consider a relation on the vertex set of a graph $G$ as follows. We define $u\prec v$ if the open neighborhood of $u$ is contained in the closed neighborhood of $v$.
It turns out that `$\prec$' is a preorder (that is reflexive and transitive) which is called the vicinal preorder.
The minimum number of chains with respect to the vicinal preorder required to cover the vertex set of a graph $G$ is called the  Dilworth number of $G$.
In Section~3, we prove that for any cograph $G$, the multiplicity of any eigenvalue $\la\ne0,-1$,
does not exceed the Dilworth number of $G$ and  show that this bound is best possible. This was first conjectured in \cite{gh}.
 In \cite{roy}, Royle proved that if a cograph $G$ has no duplications, then $G$ has no 0 eigenvalue, and asked if beside cographs,  there are any other natural classes of graphs for which this property holds. In Section~4, we give a partial answer to this question by showing that an $H$-free family of graphs $\f$ has this property  if and only if $\f$ is a subclass of the family of cographs.
 A similar result is also shown to hold for the $-1$ eigenvalue. It is also observed that these results can be stated in terms of
 the existence of a basis consisting of weight 2 vectors  for the eigenspace of either $0$ or $-1$ eigenvalues.

\section{Preliminaries}\label{pre}

In this section we introduce the notations and recall  a basic fact  which will be used frequently.
The graphs we consider are all simple and undirected.
For a  graph $G$, we denote  by $V(G)$ the vertex set of $G$.
For two vertices $u,v$, by $u\sim v$ we mean $u$ and $v$ are adjacent.
 If  $V(G)=\{v_1, \ldots , v_n\}$, then the {\em adjacency matrix} of $G$ is an $n \times  n$
 matrix $A(G)$ whose $(i, j)$-entry is $1$ if $v_i\sim v_j$ and  $0$ otherwise.
 By {\em eigenvalues} and {\em rank} of $G$ we mean those of $A(G)$.
 The multiplicity of an eigenvalue $\la$ of $G$ is denoted by $\mul(\la,G)$.
For a vertex $v$ of $G$, let $N_G(v)$ denote the {\em open neighborhood} of $v$, i.e.   the set of
vertices of $G$ adjacent to $v$ and $N_G[v]=N_G(v)\cup\{v\}$ denote the {\em closed neighborhood} of $v$; we will drop
the subscript $G$  when it is clear from the context.
Two vertices $u$ and $v$ of $G$ are called {\em duplicates} if $N(u)=N(v)$
and called {\em coduplicates} if $N[u]=N[v]$. Note that duplicate vertices cannot be
adjacent while coduplicate vertices must be adjacent. We write $u\equiv v$ if $u$ and $v$ are either duplicates or coduplicates.
A subset $S$ of  $V(G)$   such that $N(u)=N(v)$
for any $u, v\in  S$   is called  a  {\em duplication  class} of $G$.  Coduplication  classes  are defined analogously. If $X\subset V(G)$, we use the notation $G-X$ to mean the subgraph of $G$ induced by $V(G)\setminus X$.

An important subclass of cographs are {\em threshold graphs}. These are the graphs which are both a cograph and a split graph (i.e. their vertex sets can be partitioned into a clique and a coclique). For more information see \cite{bls,mp}.

\begin{remark}\label{rem} ({\em Sum rule}) Let $\x$ be an eigenvector for eigenvalue $\la$ of a graph $G$. Then the entries of $\x$ satisfy the following equalities:
\begin{equation}\label{sumrule}
\la\x(v)=\sum_{u\sim v}\x(u),~~\hbox{for all}~v\in V(G).
\end{equation}
From this it is seen that if $\la\ne0$, then $\x$ is constant on each duplication class and if $\la\ne-1$, then $\x$ is constant on each coduplication class.
\end{remark}

\section{Dilworth number and multiplicity of eigenvalues}

In this section we recall a preorder on the vertex set of a graph which is defined
in terms of open/closed neighborhoods of vertices.
In \cite{gh}, it was conjectured that the multiplicity of eigenvalues of any cograph except for $0,-1$ is bounded above by the maximum size of an antichain with respect to this preorder.  We prove this conjecture in this section.

Let $G$ be a graph and consider the following relation on $V(G)$:
 $$u\prec v~~\hbox{if and only if}~\left\{\begin{array}{ll}N[u]\subseteq N[v]&\hbox{if $u\sim v$,}\\N(u)\subseteq N(v)&\hbox{if $u\not\sim v$,}\end{array}\right.$$
or equivalently
$$u\prec v~~\hbox{if and only if}~~N(u)\subseteq N[v].$$
It is easily verified that `$\prec$' is a {\em preorder}  that is reflexive and transitive \cite{fh}.
This preorder is called {\em vicinal preorder}. Note that  `$\prec$' is not antisymmetric, since $u\prec v$ and $v\prec u$ imply only $u\equiv v$.

We consider the chains and antichains in $G$ with respect to the vicinal preorder.
The minimum number of chains with respect to the vicinal preorder required to cover $V(G)$ is called the {\em  Dilworth number} of $G$ and denoted by $\nabla(G)$.
This parameter was first introduced in \cite{fh} (see also \cite{bls}). Also we refer to \cite[Chapter 9]{mp} for several interesting results on Dilworth number and its connection with other graph theoretical concepts.
Note that by Dilworth's theorem, $\nabla(G)$ is equal to  the maximum size of an antichain of $V(G)$ with respect to the vicinal preorder.

\begin{remark}\label{thrstruc} ({\em Structure of threshold graphs}) As it was observed in \cite{ma} (see also \cite{as,ha}), the vertices of any threshold graph $G$ can be partitioned into $t$ non-empty coduplication classes $V_1,\ldots, V_t$ and $t$ non-empty duplication classes $U_1,\ldots, U_t$  such that the vertices in $V_1\cup\cdots\cup V_t$ form a clique and
$$N(u)=V_1\cup\cdots\cup V_i~~\hbox{for any}~ u\in U_i,~1\le i\le t.$$
For an illustration of this structure with $t=5$, see Figure~\ref{figthr}.
\end{remark}

From the structure of threshold graphs it is clear that if $G$ is a threshold graph, then $\nabla(G)=1$. In \cite{ch,fh}, it was observed that the converse is also true.
We give its simple argument here.
If $\nabla(G)=1$, then all the vertices of $G$ form a chain
$v_1\prec v_2\prec\cdots\prec v_n$. First note that $v_i\sim v_{i+1}\not\sim v_{i+2}$ is impossible for any $i$;
since otherwise $N[v_i]\subseteq N[v_{i+1}]$ and $N(v_{i+1})\subseteq N(v_{i+2})$.
Hence, $v_i\in N(v_{i+1})\subset N(v_{i+2})$, and thus $v_{i+2}\in N(v_i)\subset N[v_{i+1}]$ which means $v_{i+1}\sim v_{i+2}$, a contradiction.
 So  there must exist some $j$ such that $v_1\not\sim\cdots\not\sim v_j\sim\cdots\sim v_n$.
It turns out that $G$ is a split graph as the vertices $v_1,\ldots,v_j$ form a coclique  and
  the vertices $v_{j+1},\ldots,v_n$ form a clique.
  Note that any induced subgraph of $G$ has Dilworth number 1. As $\nabla(P_4)=2$, $G$ has no induced subgraph $P_4$, so $G$ is also a cograph which in turn implies that $G$ is a threshold graph.

\begin{figure}\centering
\begin{tikzpicture}
\draw [line width=1.75pt] (-4,-3)-- (-4,-7);
\draw [line width=1.75pt] (-4,-3)-- (-3.4,-4.19);
\draw [line width=1.75pt] (-3.4,-4.19)-- (-3.38,-5.9);
\draw [line width=1.75pt] (-3.38,-5.9)-- (-4,-7);
\draw [line width=1.75pt] (-3.4,-4.19)-- (-4,-7);
\draw [line width=1.75pt] (-4,-3)-- (-4.28,-4.91);
\draw [line width=1.75pt] (-4,-3)-- (-3.38,-5.9);
\draw [line width=1.75pt] (-4.28,-4.91)-- (-4,-7);
\draw [line width=1.75pt] (-4.28,-4.91)-- (-3.4,-4.19);
\draw [line width=1.75pt] (-4.28,-4.91)-- (-3.38,-5.9);
\draw [line width=1.75pt] (-7,-3)-- (-4,-3);
\draw [line width=1.75pt] (-7,-4)-- (-4,-3);
\draw [line width=1.75pt] (-7,-4)-- (-3.4,-4.19);
\draw [line width=1.75pt] (-7,-5)-- (-4,-3);
\draw [line width=1.75pt] (-7,-5)-- (-3.4,-4.19);
\draw [line width=1.75pt] (-7,-5)-- (-4.28,-4.91);
\draw [line width=1.75pt] (-7,-6)-- (-4,-3);
\draw [line width=1.75pt] (-7,-6) to [out=35,in=-160] (-3.4,-4.19);
\draw [line width=1.75pt] (-7,-6)-- (-4.28,-4.91);
\draw [line width=1.75pt] (-7,-6)-- (-3.38,-5.9);
\draw [line width=1.75pt] (-7,-7)-- (-4,-3);
\draw [line width=1.75pt] (-7,-7)-- (-4.28,-4.91);
\draw [line width=1.75pt] (-7,-7)-- (-3.38,-5.9);
\draw [line width=1.75pt] (-7,-7)-- (-4,-7);
\draw[line width=1.75pt] (-7,-7) to [out=30,in=-120] (-3.4,-4.19);
\draw [fill=black] (-4,-3) circle (5.5pt);
\draw[color=black] (-3.6,-2.8) node {$V_1$};
\draw [fill=black] (-3.4,-4.19) circle (5.5pt);
\draw[color=black] (-3.2,-3.8) node {$V_2$};
\draw [fill=black] (-4.28,-4.91) circle (5.5pt);
\draw[color=black] (-4.5,-5.3) node {$V_3$};
\draw [fill=black] (-3.38,-5.9) circle (5.5pt);
\draw[color=black] (-2.9,-6.1) node {$V_4$};
\draw [fill=black] (-4,-7) circle (5.5pt);
\draw[color=black] (-3.5,-7.1) node {$V_5$};
\draw(-7,-4) circle (5.5pt);
\draw(-7,-3) circle (5.5pt);
\draw (-7,-5) circle (5.5pt);
\draw (-7,-6) circle (5.5pt);
\draw (-7,-7) circle (5.5pt);
\draw (-7.5,-3) node {$U_1$};
\draw (-7.5,-4) node {$U_2$};
\draw (-7.5,-5) node {$U_3$};
\draw (-7.5,-6) node {$U_4$};
\draw (-7.5,-7) node {$U_5$};
\end{tikzpicture}
\caption{A threshold graph: $V_i$'s are cliques, $U_i$'s are cocliques, each thick line indicates the edge set of a complete bipartite subgraph on some $U_i,V_j$ }\label{figthr}
\end{figure}
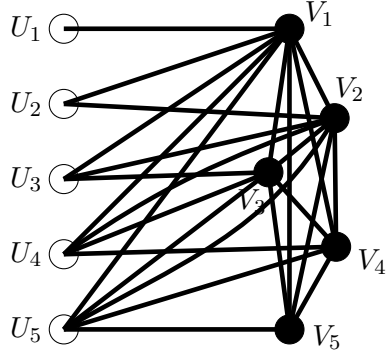

The next result shows that when vicinal preorder is a total order, i.e. the whole  $V(G)$ itself is a chain,
  a strong constraint is imposed on the multiplicities of the  eigenvalues. This result was first proved in \cite{jtt0} (see \cite{gh} for a simpler proof).

\begin{lem}\label{thr} {\rm(\cite{jtt0})} Let $G$ be a threshold graph. Then any eigenvalue $\la\ne0,-1$ is simple.
\end{lem}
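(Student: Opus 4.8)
The plan is to exploit the explicit description of threshold graphs recorded in Remark~\ref{thrstruc} to reduce the eigenvalue equation to a short one-way recursion. Fix an eigenvalue $\la\ne0,-1$ together with an eigenvector $\x$. Since $\la\ne0$, the Sum rule (Remark~\ref{rem}) shows that $\x$ is constant on each duplication class $U_i$, and since $\la\ne-1$ it is constant on each coduplication class $V_i$; I would write $a_i$ and $b_i$ for the common values of $\x$ on $U_i$ and $V_i$. Thus $\x$ is completely determined by the $2t$ scalars $a_1,\dots,a_t,b_1,\dots,b_t$, and it suffices to prove that the space of admissible tuples is at most one-dimensional.

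Next I would record the Sum rule for one representative of each class. Using $N(u)=V_1\cup\cdots\cup V_i$ for $u\in U_i$, and noting that a vertex of $V_k$ is adjacent to all other vertices of the clique $V_1\cup\cdots\cup V_t$ together with every vertex of $U_k\cup\cdots\cup U_t$, the equations take the form
$$\la\, a_i=\sum_{j=1}^{i}|V_j|\,b_j,\qquad (\la+1)\,b_k=\sum_{j=1}^{t}|V_j|\,b_j+\sum_{i=k}^{t}|U_i|\,a_i.$$
Subtracting consecutive instances within each family (and reading $a_0=0$) collapses these to the local relations $\la(a_i-a_{i-1})=|V_i|\,b_i$ for $1\le i\le t$ and $(\la+1)(b_k-b_{k+1})=|U_k|\,a_k$ for $1\le k\le t-1$.

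The key observation is that, because $\la\ne0$ and $\la+1\ne0$, these relations can be solved forward, one index at a time. Indeed $b_1=\tfrac{\la}{|V_1|}\,a_1$, and given $a_k,b_k$ one recovers $b_{k+1}=b_k-\tfrac{|U_k|}{\la+1}\,a_k$ and then $a_{k+1}=a_k+\tfrac{|V_{k+1}|}{\la}\,b_{k+1}$. Hence every eigenvector's tuple $(a_i,b_i)$, and therefore $\x$ itself, is determined by the single scalar $a_1$; in particular $a_1=0$ would force $\x=0$. Consequently the eigenspace of $\la$ is at most one-dimensional, so $\la$ is simple.

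I expect the only delicate points to be bookkeeping rather than genuine difficulty. The first is correctly identifying the neighborhood of a $V_k$-vertex: since it lies in the clique one must subtract its own entry and then add exactly the classes $U_k,\dots,U_t$, which is where the two sums above come from. The second is verifying that the forward recursion is genuinely non-circular, each new pair $(a_{k+1},b_{k+1})$ being expressed purely in terms of already-known quantities; this is precisely the place where the hypotheses $\la\ne0$ and $\la\ne-1$ are used (one divides by $\la$ and by $\la+1$). I would also remark that non-emptiness of the classes $U_i,V_i$ is what guarantees the structure of Remark~\ref{thrstruc} applies, after which the whole argument is purely linear-algebraic.
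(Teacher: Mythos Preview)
Your argument is correct. The paper does not give its own proof of Lemma~\ref{thr}; it simply cites \cite{jtt0} and points to \cite{gh} for a shorter argument, so there is no direct comparison to make. That said, your approach is exactly the mechanism the paper deploys inside the proof of Theorem~\ref{main}: use the class structure of Remark~\ref{thrstruc} together with the sum rule to derive a two-term recursion along $U_i,V_i$, and invoke $\la\ne0,-1$ to make that recursion invertible. The only cosmetic difference is direction---you run the recursion forward from $a_1$ and conclude that the tuple $(a_i,b_i)$ is determined by one scalar, whereas in Theorem~\ref{main} the paper starts from $u_t$ and peels backward to $V_1$, showing that an eigenvector vanishing at $u_t$ must vanish on all of $H$. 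Both versions establish one-dimensionality of the eigenspace in the same way.
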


Motivated by this result, we \cite{gh} investigated  the connection between eigenvalue multiplicity and  Dilworth number of cographs (as an extension of threshold graphs).
As the vertices of any graph $G$ can be partitioned into $\nabla(G)$ chains and so into $\nabla(G)$ threshold subgraphs, we conjectured \cite{gh} that the Dilworth number is an upper bound for the multiplicity of any eigenvalues $\la\ne0,-1$ in cographs. We prove this conjecture in the next theorem. Before that we present the following crucial lemma on the structure of cographs.

\begin{lem}\label{partition}
Let $G$ be a cograph with Dilworth number $k\ge2$. Then there exists a vertex-partition of $G$ into $k$ threshold graphs such that one of the threshold graphs $H$ of the partition  has the  property that all the vertices of $H$ have the same neighborhood in $G-V(H)$.
\end{lem}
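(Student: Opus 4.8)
The plan is to use strong induction on $|V(G)|$, after reducing to the case that $G$ is disconnected. The reduction uses that complementation reverses the vicinal preorder: from $N_{\overline{G}}(v)=V(G)\setminus N_G[v]$ and $N_{\overline{G}}[u]=V(G)\setminus N_G(u)$ one checks that $u\prec_G v$ if and only if $v\prec_{\overline{G}}u$. Thus $G$ and $\overline{G}$ have the same chains and antichains (with reversed order), and in particular $\nabla(\overline{G})=\nabla(G)$. Since being a threshold graph, being a module, and having a constant neighborhood outside a module are all preserved by complementation, and induced subgraphs commute with complementation, a partition of $\overline{G}$ of the desired shape transfers verbatim to $G$. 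As every cograph on at least two vertices is disconnected or has a disconnected complement, I may therefore assume $G$ is disconnected, with connected components $C_1,\dots,C_m$ where $m\ge2$.

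Next I would analyze $\nabla$ on a disjoint union, which is the structural core. If $u,v$ lie in different components then $u\not\sim v$, so $u\prec v$ means $N(u)\subseteq N(v)$; as these neighborhoods sit in disjoint components, this forces $N(u)=\emptyset$. Hence two non-isolated vertices in distinct components are incomparable, while all isolated vertices form one $\equiv$-class lying below everything. Consequently antichains combine independently across the non-singleton components and isolated vertices add nothing, yielding $\nabla(G)=\sum_{|C_i|\ge2}\nabla(C_i)$. I would also record that an isolated vertex can be attached to any threshold part of a partition while keeping it threshold, so isolated vertices never force an extra part.

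The induction step is then short. Because $k\ge2$, some component $C_i$ is non-singleton. If $\nabla(C_i)=1$ then $C_i$ is threshold and I set $H=C_i$; otherwise $\nabla(C_i)\ge2$ and the induction hypothesis applied to the strictly smaller cograph $C_i$ gives a partition of $C_i$ into $\nabla(C_i)$ threshold graphs, one of which, $H$, is a module of $C_i$ on which the neighborhood in $C_i-V(H)$ is constant. In both cases, since $C_i$ has no edges to the rest of $G$, each vertex of $H$ has the same neighborhood in $G-V(H)$ as in $C_i-V(H)$, so $H$ is a module of $G$ of the required type. I then partition the union of the remaining non-singleton components into $k-\nabla(C_i)$ threshold graphs (using that any graph splits into $\nabla$ threshold subgraphs) and attach the isolated vertices of $G$ to some part other than $H$. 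By the additivity formula the total is $\nabla(C_i)+(k-\nabla(C_i))=k$ threshold graphs, one of which is $H$.

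The step I expect to be most delicate is the additivity of $\nabla$ over disjoint unions together with the exact count of the parts. Isolated vertices are anomalous---comparable to every vertex and collapsing into a single class---so the additivity formula must be phrased for non-singleton components and the isolated vertices reinserted by hand when assembling the final partition; getting this bookkeeping right, so that the count is exactly $k$ rather than $k\pm1$, is the real subtlety. By contrast the complement symmetry is what makes the argument short, since it removes the connected (join) case at no extra cost and lets the whole proof run through the single, transparent disjoint-union picture.
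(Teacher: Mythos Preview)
Your argument is correct and follows essentially the same route as the paper's proof: reduce to the disconnected case via the observation that complementation reverses the vicinal preorder (so $\nabla(G)=\nabla(\overline G)$ and the desired partition transfers), and then exploit the component structure. The only differences are cosmetic---you induct on $|V(G)|$ while the paper inducts on $k$, and you make the additivity $\nabla(G)=\sum_{|C_i|\ge 2}\nabla(C_i)$ and the treatment of isolated vertices explicit, whereas the paper simply assumes $G$ has no isolated vertices and leaves the additivity and the extension of the partition from a single component to all of $G$ implicit.
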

\begin{proof}{Note that  $N_G(u)\subseteq N_G[v]$ if and only if $N_{\overline G}(v)\subseteq N_{\overline G}[u]$. It follows if we have $u_1\prec u_2\prec\cdots\prec u_\ell$ in $G$, then we have $u_\ell\prec\cdots\prec u_2\prec u_1$ in $\overline G$. This in particular implies that
$\nabla(G)=\nabla(\overline G)$ and that if the assertion holds for $G$, then it also holds for $\overline G$. Since connected cographs have disconnected complements (because any cograph is either a union or join of two smaller cographs), we may assume that $G$ is disconnected. Also we may suppose that $G$ has no isolated vertices.

We now proceed by induction on $k\ge2$. First let $k=2$. As $G$ is disconnected, it is a union of two threshold graphs  for which the assertion
trivially holds. Now, let $k\ge3$. If $G$ has a connected component with Dilworth number at least 2, then we are done by the induction hypothesis.
Otherwise, all connected components of $G$ are threshold graphs for which the assertion holds as well.
}\end{proof}

We are now in a position to prove the main result of the paper.

\begin{thm}\label{main}
For any cograph $G$, the multiplicity of any eigenvalue $\la\ne0,-1$,
does not exceed the Dilworth number of $G$. Moreover, there are an infinite family of cographs for which this bound is tight.
\end{thm}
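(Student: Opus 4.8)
The plan is to prove the inequality by induction on $|V(G)|$ and to settle tightness with an explicit family. For the tightness I would take $G=tK_2$, the disjoint union of $t$ edges: this is a cograph, each edge is a coduplication class (its two ends are coduplicates, and edges in different components are incomparable in the vicinal preorder), so $\nabla(tK_2)=t$, while the eigenvalue $1$ has multiplicity $t$, one contribution from each component. Thus the bound is attained for every $t$, giving the required infinite family, and the real content is the inequality. The first reduction is to the disconnected case: if $G$ has components $G_1,\dots,G_r$, none a single vertex, then vertices of different components are incomparable under $\prec$, so $\nabla(G)=\sum_i\nabla(G_i)$, while obviously $\mul(\la,G)=\sum_i\mul(\la,G_i)$; applying the inductive hypothesis componentwise closes this case. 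Hence I may assume $G$ is connected with $\nabla(G)=k\ge2$.

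For the connected case I would invoke Lemma~\ref{partition} to get a threshold subgraph $H$ on $M:=V(H)$, all of whose vertices share the same neighborhood $S$ in $R:=G-M$, while $R$ is covered by the remaining $k-1$ threshold graphs, so that $\nabla(R)\sg k-1$ and, by induction, $\mul(\la,R)\sg k-1$. If $S=\emptyset$ then $G$ is the disjoint union of $H$ and $R$ and we are back in the case already treated, so assume $S\ne\emptyset$. Writing an eigenvector as $\x=(\x_H,\x_R)$, the hypothesis on $H$ means the off-diagonal block of $A(G)$ is the rank-one matrix $\mathbf 1_M\mathbf 1_S^{\top}$, so $A(G)\x=\la\x$ becomes the pair of equations $(\la I-A(H))\x_H=(\mathbf 1_S^{\top}\x_R)\,\mathbf 1_M$ and $(\la I-A(R))\x_R=(\mathbf 1_M^{\top}\x_H)\,\mathbf 1_S$.

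Now consider the restriction map $T\colon E_\la(G)\to\mathbb R^{V(R)}$, $T(\x)=\x_R$, where $E_\la(G)$ is the $\la$-eigenspace. If $\x\in\ker T$, i.e. $\x_R=0$, then since $S\ne\emptyset$ the second equation forces $\mathbf 1_M^{\top}\x_H=0$ and the first forces $\x_H\in E_\la(H)$; as $H$ is threshold and $\la\ne0,-1$, Lemma~\ref{thr} gives $\dim E_\la(H)\sg 1$. The crucial point is that this (at most one-dimensional) eigenvector, should it exist, has \emph{nonzero} coordinate sum, so the extra condition $\mathbf 1_M^{\top}\x_H=0$ forces $\x_H=0$; that is, $\ker T=0$ and $T$ is injective. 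Granting this, $\mul(\la,G)=\dim\operatorname{im}T$, and the second equation shows every $\x_R\in\operatorname{im}T$ lies in $U:=(\la I-A(R))^{-1}\langle\mathbf 1_S\rangle$. Since $\langle\mathbf 1_S\rangle$ is one-dimensional, $\dim U=\dim\ker(\la I-A(R))+\dim\!\big(\langle\mathbf 1_S\rangle\cap\operatorname{im}(\la I-A(R))\big)\sg \mul(\la,R)+1$, whence $\mul(\la,G)\sg\dim U\sg\mul(\la,R)+1\sg k$, as required.

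The main obstacle is precisely the italicized claim used to kill $\ker T$: \emph{a threshold graph $H$ has no eigenvector for an eigenvalue $\la\ne0,-1$ that is orthogonal to the all-ones vector}. I would establish this from the canonical structure of threshold graphs in Remark~\ref{thrstruc}. By the sum rule an eigenvector is constant on each class $U_i,V_i$; writing $y_i,x_i$ for these constants turns $A(H)\x=\la\x$ into a small triangular recurrence (for instance $\la y_i=\sum_{l\sg i}p_l x_l$ together with the equations governing the $x_i$), and I would show that imposing $\sum_i p_i x_i+\sum_i q_i y_i=0$ on top of these relations forces $\la\in\{0,-1\}$, e.g. by checking that the coordinate sum, rewritten through the recurrence, factors with $0$ and $-1$ as its only admissible roots. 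A cleaner alternative exploits self-duality: for $\x\perp\mathbf 1$ one has $A(\overline H)\x=-(1+\la)\x$ with $\overline H$ again threshold, so a zero-sum $\la$-eigenvector of $H$ would simultaneously be a zero-sum $(-1-\la)$-eigenvector of $\overline H$, and I would derive a contradiction from this rigid coincidence. I expect this non-vanishing statement, rather than the surrounding linear algebra, to be the technical heart; once it is in hand the induction closes exactly as above.
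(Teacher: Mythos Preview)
Your approach is correct and follows the same overall plan as the paper: reduce to the connected case, invoke Lemma~\ref{partition} to split off a threshold piece $H$ all of whose vertices see the same set $S$ in $R=G-V(H)$, and then exploit the threshold structure to bound $\mul(\la,G)$ in terms of $\mul(\la,R)$. The packaging, however, is different. The paper argues by contradiction: assuming $\mul(\la,G)\ge k+1$, it fixes a vertex $u_t\in U_t$, passes to a $k$-dimensional subspace of $E_\la(G)$ vanishing at $u_t$, and then runs a cascade on the threshold layers $U_t,V_t,U_{t-1},V_{t-1},\dots$ (with the external contribution $\alpha$ carried along) to show every vector in that subspace vanishes on all of $H'$, so it restricts to a $k$-dimensional subspace of $E_\la(R)$, contradicting induction. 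You instead prove the clean inequality $\mul(\la,G)\le \mul(\la,R)+1$ directly, via injectivity of the restriction $T$ together with the preimage bound. Your version isolates the threshold computation as a standalone statement (``no zero-sum $\la$-eigenvector for $\la\ne0,-1$''); the paper's cascade is essentially the same computation run from the top layer downward, but phrased for $G$-eigenvectors rather than $H$-eigenvectors.

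Your ``technical heart'' is genuine and your first proposed attack on it works. With the notation of Remark~\ref{thrstruc}, writing $x_i,y_i$ for the common values on $V_i,U_i$, the sum rule gives $(\la+1)x_1=\sum_j p_jx_j+\sum_j q_jy_j$, which is exactly the coordinate sum; so zero sum and $\la\ne-1$ force $x_1=0$, then $\la y_1=p_1x_1=0$ gives $y_1=0$, and the same two steps propagate to $x_2,y_2,\dots$ until $\x=0$. (Your ``self-duality'' alternative, passing to $A(\overline H)\x=-(1+\la)\x$, is correct as stated but does not by itself yield a contradiction without eventually running the same recurrence on one of the two sides.) Two small points: you should say explicitly that isolated vertices can be discarded (they are $\prec$ everything, so do not change $\nabla$, and contribute only to $\la=0$), which justifies the ``none a single vertex'' clause; and your tightness family $tK_2$ is fine---the paper notes that disconnected examples such as $t$ copies of a fixed threshold graph are immediate and goes on to give a connected family $K_1\vee(K_{s,\dots,s}\cup(s^2-s)K_1)$ as well.
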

\begin{proof}{Let $G$ be a cograph, $\nabla(G)=k$ and $\la\ne0,-1$ be an eigenvalue of $G$.
We proceed by induction on $k$. If $k=1$, then $G$ is a threshold graph and the assertion follows from Lemma~\ref{thr}.
Let $k\ge2$, and the theorem hold for graphs with Dilworth number at most $k-1$.
If $G$ is disconnected, we are done by the induction hypothesis. So  we may assume that $G$ is connected.
For a contradiction, assume that $\mul(\la,G)\ge k+1$.
By Lemma~\ref{partition}, $G$ contains a threshold subgraph $H'$ such that any two vertices of $H'$ have the same neighborhood in $G-V(H')$ with
$G-V(H')$ having Dilworth number $k-1$.
 Then $H'=H\cup I$ where $I$ (possibly empty) is the subgraph consisting of isolated vertices of $H'$ and $H$ is a connected threshold graph with at least one edge.
  Let $V_1,\ldots,V_t$ and $U_1,\ldots,U_t$ be the partition of $V(H)$ according to Remark~\ref{thrstruc}.
  Let $u_t\in U_t$.
 There is a $k$-dimensional subspace $\Omega$ of eigenvectors of $G$ for $\la$ which vanishes on $u_t$.
 Let $\x\in\Omega$. Note that $N_H(u_t)=V_1\cup\cdots\cup V_t$. By the sum rule and since $\x(u_t)=0$,
 $$0=\la\x(u_t)=\sum_{v\in V_1\cup\cdots\cup V_t}\x(v)+ \alpha,$$
 where $\alpha$ is the sum of entries of $\x$ on the neighbors of $u_t$ outside $H'$; by the way $H'$ is chosen, this is constant for all the vertices of $H'$.
 Let $v_t\in V_t$.
 Then $$N_H(v_t)=U_t\cup V_1\cup\cdots\cup V_t\setminus\{v_t\}.$$
 By Remark~\ref{rem}, for all vertices $u\in U_t$ we have $u\equiv u_t$, and thus $\x(u)=\x(u_t)=0$. It follows that

  \begin{align*}
\la\x(v_t)&=\sum_{v\in N_H(v_t)}\x(v)+ \alpha\\
&=-\x(v_t)+\sum_{v\in V_1\cup\cdots\cup V_t}\x(v)+ \alpha\\&=-\x(v_t)+\la\x(u_t)\\&=-\x(v_t).
\end{align*}
  As $\la\ne-1$, it follows that $\x(v_t)=0$. Again by Remark~\ref{rem}, we have $\x(v)=0$ for all $v\in V_t$.
  Now let $u_{t-1}\in U_{t-1}$. We have $N_H(u_{t-1})= V_1\cup\cdots\cup V_{t-1}$, and since $\x={\bf0}$ on $V_t$,
  $$\la\x(u_{t-1})=\sum_{v\in V_1\cup\cdots\cup V_{t-1}}\x(v)+\alpha=\sum_{v\in V_1\cup\cdots\cup V_t}\x(v)+\alpha=\la\x(u_t)=0.$$
 Again, it follows that $\x={\bf0}$ on $U_{t-1}$.
  Let $v_{t-1}\in V_{t-1}$.
 Then $$N_H(v_{t-1})=U_t\cup U_{t-1}\cup V_1\cup\cdots\cup V_t\setminus\{v_{t-1}\}.$$
 Therefore,
 \begin{align*}
\la\x(v_{t-1})&=-\x(v_{t-1})+\sum_{v\in U_t\cup U_{t-1}\cup V_1\cup\cdots\cup V_t}\x(v)+ \alpha\\
&=  -\x(v_{t-1})+\sum_{v\in V_1\cup\cdots\cup V_{t-1}}\x(v)+ \alpha\\
&=-\x(v_{t-1})+\la\x(u_{t-1})\\&=-\x(v_{t-1}).
\end{align*}
 As $\la\ne-1$, it follows that $\x(v_{t-1})=0$. By Remark~\ref{rem}, we have $\x(v)=0$ for all $v\in V_{t-1}$.

Continuing this procedure, we alternately choose vertices $u_{t-2},v_{t-2},\ldots,u_1,v_1$ where similar to the above argument we see that $\x$ vanishes on $U_{t-2},V_{t-2},\ldots,U_1,V_1$. So $\x=\bf0$ on the whole $V(H)$ and we must have $\alpha=0$ which in turn implies that $\x$ is zero on $I$. Hence we conclude that $\x$ is zero on $V(H')$. It turns out that if for any $\x\in\Omega$ we remove the entries corresponding to the vertices of $H'$  from $\x$, the resulting vector is an eigenvector of $G-V(H')$ for eigenvalue $\la$. This means that the graph $G-V(H')$ with Dilworth number $k-1$ has the eigenvalue $\la$ with multiplicity $k$, which is a contradiction. Hence the assertion follows.

Finally, we present an infinite family of graphs for which the multiplicity of an eigenvalue $\la\ne0,-1$ is equal to the Dilworth number.
It is easy to construct disconnected cographs for which the equality holds (just take $k$ copies of a fixed threshold graph).
Here we show that the bound can be achieved by connected cographs.
Let $s,k\ge1$ and $G=G(s,k):=K_1\vee(K_{s,\ldots,s}\cup(s^2-s)K_1)$, in which $K_{s,\ldots,s}$ is a complete $k$-partite graph with parts of size $s$, and `$\vee$' denotes the join of two graphs.
Let $v$ be the vertex of $G$ adjacent to all the other vertices, $B_1,\ldots,B_k$ be the parts of the complete $k$-partite subgraph, and $C$ be the set of $s^2-s$ pendant vertices.
It is easily seen that $B_1,\ldots,B_k,\{v\},C$ make an equitable partition\footnote{For the definition and properties of equitable partitions, we refer to \cite[p.~24]{bh}.} of $G$, with the quotient matrix
  $$Q=\left(
    \begin{array}{ccc|cc}
       &  & &   1 & 0 \\
       & s(J_k-I_k)  & & \vdots & \vdots \\
         & &  & 1 & 0 \\ \hline
        s & \cdots & s & 0 & s^2-s \\
      0 & \cdots & 0 & 1&0 \\
    \end{array}
  \right),$$
  where $J_k$ is the $k\times k$ all 1's matrix.
Now, we have
$$Q+sI=\left(
    \begin{array}{ccc|cc}
      s & \cdots & s & 1 & 0 \\
      \vdots &   &\vdots & \vdots & \vdots \\
        s & \cdots & s & 1 & 0 \\ \hline
        s & \cdots & s & s & s^2-s \\
      0 & \cdots & 0 & 1&s \\
    \end{array}
  \right).$$
It is seen that all the rows of $Q+sI$ can be obtained by linear combinations of the last two rows. So ${\rm rank}(Q+sI)=2$ which means $\la=-s$ is an eigenvalue of $Q$ with multiplicity $k$ and thus an eigenvalue of $G$ with multiplicity at least $k$.
Also the vertices of $G$ can be partitioned into $k$ chains with respect to vicinal preorder, namely  $C\cup B_1\cup\{v\}$, $B_2,\ldots, B_k$. Hence $\nabla(G)\le k$.
}\end{proof}

\begin{remark} Theorem~\ref{main} cannot hold for general graphs. Here we describe a family of counterexamples.
For any positive integer $n$, the {\em cocktail party graph} $\cp(n)$ is the graph obtained from the complete graph $K_{2n}$ by removing a perfect
matching. In fact, $\cp(n)$ is the complete $n$-partite graph $K_{2,\ldots,2}$. Let $H$ be a graph with vertices $v_1, \ldots , v_n$,
and $a_1,\ldots , a_n$ be non-negative integers. The {\em generalized line graph}
$L(H; a_1,\ldots , a_n)$ consists of disjoint copies of $L(H)$  and $\cp(a_1), \ldots ,\cp(a_n)$
together with all edges joining a vertex $\{v_i , v_j\}$ of $L(H)$ with each vertex in $\cp(a_i)$ and $\cp(a_j)$.
It is known that all the eigenvalues of generalized line graphs are greater than or equal to $-2$. Moreover, by Theorem 2.2.8 of \cite{crs}, if $H$ has $m$ edges and if not all $a_i$'s are zero, then
\begin{equation}\label{mult-2}
\mul(-2,L(H; a_1,\ldots , a_n))=m-n+\sum_{i=1}^na_i.
\end{equation}
Consider the generalized line graph $G=G(k):=L(K_{1,k};k,1,\ldots,1)$. In fact $G$ is obtained from the graph $K_k\vee\cp(k)$ by attaching two pendant vertices
to each of the vertices of $K_k$. Let $u_1,\ldots,u_k$ be the vertices of $K_k$, $U_1,\ldots,U_k$ be the parts of $\cp(k)$, and $w_{i1},w_{i2}$ be the pendant vertices attached to $u_i$, for $i=1,\ldots,k$. Then for each $i$, $\{u_i,w_{i1},w_{i2}\}\cup U_i$ form a chain, so $\nabla(G)\le k$. On the other hand, $U_1,\ldots,U_k$ is an antichain in the vicinal preorder of $G$, so $\nabla(G)\ge k$. Thus we have $\nabla(G)=k$.
However by \eqref{mult-2}, $\mul(-2,G)=2k-1$.
\end{remark}

\begin{remark} The bound given in Theorem~\ref{main}  can be arbitrarily loose. To see this, let $r_1,\ldots,r_k$ be distinct integers greater than $1$ and $G=K_{r_1,\ldots,r_k}$. Clearly, $G$ is a cograph with $\nabla(G)=k$. However, all the non-zero eigenvalues of $G$ are simple (see  \cite[Theorem~1]{eh}).
\end{remark}

\section{$H$-free graphs that only (co)duplications reduce their rank}

In \cite{gh} a characterization of cographs based on graph eigenvalues was given;  namely a graph $G$ is a cograph if and only if no induced subgraph of $G$ has an eigenvalue in the interval $(-1,0)$. In this section we give another characterization for the family of cographs
which is based on the presence of eigenvalue  $0$ or $-1$ in connection with the existence of duplications or coduplications.

Regarding the  presence of  eigenvalue $0$ in cographs, Royle \cite{roy} proved the following result confirming a conjecture by  Sillke \cite{si}.
\begin{lem}\label{roy0} If a cograph $G$ has no duplications, then $A(G)$ has full rank.
\end{lem}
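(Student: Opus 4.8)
The plan is to prove the sharp form of the statement: for a cograph $G$ with no duplicates, $\mul(0,G)$ equals the number of isolated vertices of $G$, which is at most $1$ (two isolated vertices would themselves be duplicates); hence $A(G)$ has full rank exactly when $G$ has no isolated vertex, the lone isolated vertex being the only possible source of a zero eigenvalue. I would induct on $|V(G)|$ using the fact that every cograph on at least two vertices is a disjoint union $G_1\cup G_2$ or a join $G_1\vee G_2$ of two smaller cographs, noting first that the no-duplicate hypothesis is inherited by $G_1$ and $G_2$: a duplicate pair lying inside one part stays a duplicate pair in $G$, while a cross-part duplicate pair would force an empty neighborhood in each part, i.e.\ two isolated vertices of $G$. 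Throughout, by Remark~\ref{rem} the equation $A(G)\x=\mathbf 0$ is just the sum rule $\sum_{u\sim v}\x(u)=0$ for all $v$.

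The union step is routine: $A(G_1\cup G_2)$ is block-diagonal, so nullities add, and no duplicates in $G$ prevents both parts from carrying an isolated vertex simultaneously, so at most one survives; the base case $K_1$ has nullity $1$. The real work is the join $G=G_1\vee G_2$. Writing $\x=(\x_1,\x_2)$ and $\sigma_i=\mathbf 1^{T}\x_i$, the all-ones off-diagonal blocks collapse and the sum rule becomes the coupled pair $A_1\x_1=-\sigma_2\mathbf 1$ and $A_2\x_2=-\sigma_1\mathbf 1$. The main obstacle is that the nullity of a join is \emph{not} the sum of the nullities of the parts: the all-ones coupling can annihilate kernel vectors of $G_1,G_2$ and could, a priori, create new ones, so an induction carrying only ``full rank'' breaks down (e.g.\ $K_1$ is singular, yet $K_1\vee K_1=K_2$ is not).

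To push the induction through I would carry an auxiliary invariant: alongside nonsingularity I track the scalar $\beta(H):=\mathbf 1^{T}A(H)^{-1}\mathbf 1$ and prove $\beta(H)>1$ for every nonsingular no-duplicate cograph $H$, while for a singular $H$ (a single isolated vertex) I record that $A(H)\y=c\mathbf 1$ is solvable only for $c=0$. In the join with both parts nonsingular, the coupled pair determines $\x$ from $(\sigma_1,\sigma_2)$, and these scalars satisfy a homogeneous $2\times2$ system whose determinant is $1-\beta_1\beta_2$; since $\beta_1,\beta_2>1$ this never vanishes, forcing $\x=\mathbf 0$, and a short computation gives $\beta(G)=(\beta_1+\beta_2-2\beta_1\beta_2)/(1-\beta_1\beta_2)\in(1,2)$, so the invariant persists. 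If a part is singular, its solvability restriction forces the relevant $\sigma_i$ to vanish and then collapses all of $\x$ to $\mathbf 0$, so the join is again nonsingular (with $\beta(G)=2-1/\beta_2$, or $\beta(G)=2$ if both parts are singular). Thus the hardest point, namely the possibility that the join manufactures a zero eigenvalue, is precisely the inequality $\beta_1\beta_2\neq1$, which the invariant $\beta>1$ guarantees. Finally, since deleting one vertex from each duplicate class (whose members give identical rows \emph{and} columns of $A$) leaves the rank unchanged, the same argument simultaneously recovers Sillke's formula that the rank of any cograph equals its number of distinct nonzero rows.
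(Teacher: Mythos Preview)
The paper does not actually prove Lemma~\ref{roy0}; it quotes it from Royle~\cite{roy} (and its companion Lemma~\ref{roy-1}) and uses it as input. So there is no ``paper's own proof'' to match. Your argument is a self-contained proof and, as far as I can check, a correct one.

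A few remarks on your write-up. First, the parenthetical ``for a singular $H$ (a single isolated vertex)'' is ambiguous: a duplicate-free cograph with nullity~$1$ need not be $K_1$ itself (e.g.\ $K_2\cup K_1$), it merely \emph{has} a single isolated vertex $v_0$, with $\ker A(H)=\mathrm{span}(e_{v_0})$. You implicitly use this finer fact in the join step (to conclude from $A_1\x_1=\mathbf 0$ and $\mathbf 1^\top\x_1=0$ that $\x_1=\mathbf 0$), so say it. Second, your invariant $\beta(H)=\mathbf 1^\top A(H)^{-1}\mathbf 1>1$ does the right job; the computations $\beta(G)=(\beta_1+\beta_2-2\beta_1\beta_2)/(1-\beta_1\beta_2)\in(1,2)$, $\beta(G)=2-1/\beta_2\in(1,2)$, and $\beta(G)=2$ in the three join sub-cases, together with $\beta(G_1\cup G_2)=\beta_1+\beta_2>2$ in the nonsingular--nonsingular union case, all check out and propagate the strict inequality.

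For comparison, Royle's original argument and the later proofs in \cite{chy,sa,bss} also induct on the union/join decomposition but do not track a numerical invariant like your $\beta$; they instead work with the cotree and argue directly about equal rows, or (equivalently) reduce by successively removing one vertex of a duplicate/coduplicate pair. Your $\beta$-invariant approach is closer in spirit to determinant/Schur-complement arguments and has the advantage of making the join step completely mechanical: the potential failure (a new kernel vector created by the all-ones coupling) is isolated as the single equation $1-\beta_1\beta_2=0$, ruled out by $\beta_i>1$. The trade-off is that you must carry and verify the auxiliary inequality through every case, whereas the cotree proofs avoid any quantitative bookkeeping.
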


The following result concerning $-1$ eigenvalues is also implicit in \cite{roy}  (see also \cite{bls,chy,sa}).

\begin{lem}\label{roy-1} If a cograph $G$ has no coduplications, then $A(G)+I$ has full rank.
\end{lem}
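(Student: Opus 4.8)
The statement to prove is Lemma~\ref{roy-1}: if a cograph $G$ has no coduplications, then $A(G)+I$ has full rank (equivalently, $-1$ is not an eigenvalue of $G$).

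The plan is to reduce this lemma to the already-established Lemma~\ref{roy0} by means of complementation. The key observation is that duplicates and coduplicates swap roles under taking complements: two vertices $u,v$ satisfy $N_G(u)=N_G(v)$ (duplicates in $G$) if and only if $N_{\overline G}[u]=N_{\overline G}[v]$ (coduplicates in $\overline G$). Indeed, in $\overline G$ the closed neighborhood of a vertex is the complement of its open $G$-neighborhood, so equality of open neighborhoods in $G$ is exactly equality of closed neighborhoods in $\overline G$. Consequently, $G$ has no coduplications if and only if $\overline G$ has no duplications. Since the family of cographs is closed under complementation, $\overline G$ is also a cograph, and Lemma~\ref{roy0} applies to it: $A(\overline G)$ has full rank, i.e.\ $0$ is not an eigenvalue of $\overline G$.

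First I would make the neighborhood translation precise and conclude that $\overline G$ is a coduplication-free$\,\to\,$duplication-free cograph, so that $\mul(0,\overline G)=0$. The remaining task is to transfer the non-presence of the eigenvalue $0$ in $\overline G$ to the non-presence of the eigenvalue $-1$ in $G$. For this I would use the matrix identity $A(G)+A(\overline G)=J-I$, where $J$ is the all-ones matrix, which relates the two adjacency matrices through the spectrum of $J$. The cleanest route is to restrict attention to the subspace orthogonal to the all-ones vector $\mathbf 1$: on $\mathbf 1^\perp$ we have $J=0$, so $A(G)+I=-A(\overline G)$ there. Thus any eigenvector $\x$ of $A(G)$ for eigenvalue $-1$ that is orthogonal to $\mathbf 1$ would be an eigenvector of $A(\overline G)$ for eigenvalue $0$, contradicting $\mul(0,\overline G)=0$; and one checks separately that a $(-1)$-eigenvector cannot be a multiple of $\mathbf 1$ unless $G$ is edgeless, a degenerate case with no coduplicates only when $G=K_1$.

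The main obstacle I anticipate is handling the component of eigenvectors along $\mathbf 1$ cleanly, since $A(G)$ and $A(\overline G)$ do not share the eigenvector $\mathbf 1$ in general and the complementation identity only simplifies on $\mathbf 1^\perp$. To sidestep case analysis I would instead argue at the level of ranks: from $A(\overline G)=-\,(A(G)+I)+J$ one sees that $A(G)+I$ and $A(\overline G)$ differ by the rank-one matrix $J$, so their ranks differ by at most one, and a short argument with the $\mathbf 1$-eigenspace pins down the exact relationship. Combined with $\mathrm{rank}\,A(\overline G)=|V(G)|$ from Lemma~\ref{roy0}, this forces $A(G)+I$ to be nonsingular. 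Alternatively, and perhaps most transparently, one observes that $G$ has a coduplication if and only if $\overline G$ has a duplication, and that $A(G)+I$ is singular if and only if $A(\overline G)$ is singular on the relevant subspace; taken together with Remark~\ref{rem}, which guarantees that a $(-1)$-eigenvector is constant on coduplication classes, this yields the contrapositive directly.
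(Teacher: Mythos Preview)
The paper does not supply its own proof of Lemma~\ref{roy-1}; it records the result as implicit in Royle's work (with further references to \cite{bss,chy,sa}). So there is no in-paper argument to compare against, and your proposal must stand on its own.

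Your complementation idea is natural---coduplicates in $G$ are exactly duplicates in $\overline G$, and $\overline G$ is again a cograph---but the transfer from ``$A(\overline G)$ nonsingular'' to ``$A(G)+I$ nonsingular'' is precisely where the argument breaks, and you have not actually closed that gap. From $A(G)+I=J-A(\overline G)$ and $(A(G)+I)\x=0$ one gets $A(\overline G)\x=(\mathbf 1^{\top}\x)\,\mathbf 1$. When $\mathbf 1^{\top}\x=0$ you are fine; but when $\mathbf 1^{\top}\x\neq 0$ there is no immediate contradiction, and ruling out only the special case ``$\x$ is a scalar multiple of $\mathbf 1$'' is not enough---what must be excluded is any $\x$ with a nonzero component along $\mathbf 1$. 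Equivalently, by the matrix determinant lemma, when $A(\overline G)$ is invertible one has
\[
\det\bigl(A(G)+I\bigr)=(-1)^n\det\bigl(A(\overline G)\bigr)\Bigl(1-\mathbf 1^{\top}A(\overline G)^{-1}\mathbf 1\Bigr),
\]
so you would need $\mathbf 1^{\top}A(\overline G)^{-1}\mathbf 1\neq 1$; nothing in your outline establishes this, and your rank remark only yields $\mathrm{rank}\,(A(G)+I)\ge n-1$. There is also a boundary issue: if $G$ has a universal vertex (e.g.\ $G=P_3$), then $\overline G$ has an isolated vertex (here $\overline{P_3}=K_2\cup K_1$), so $A(\overline G)$ is genuinely singular and the appeal to Lemma~\ref{roy0} in the form you need already fails.

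The standard route---and what is ``implicit in \cite{roy}'' and made explicit in \cite{bss,chy,sa}---is to argue for $A(G)+I$ directly, by the same induction on the union/join decomposition (cotree) used for $A(G)$, rather than to deduce the $-1$ statement from the $0$ statement via complementation.
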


We say that a graph $G$ satisfies  {\em Duplication-Rank Property} (DRP for short) if the following holds:
$$\hbox{$G$ has a duplication, or $A(G)$ has full rank.}$$
Similarly, we say that $G$ satisfies {\em Coduplication-Rank Property} (CDRP for short) if:
$$\hbox{$G$ has a coduplication, or $A(G)+I$ has full rank.}$$

Lemma~\ref{roy0} says that cographs satisfy DRP. Motivated by this, Royle~\cite{roy} posed the following question:
$$\hbox{\em Beside cographs, are there any other natural classes of graphs for which DRP holds?}$$

For a given graph $H$, the family of $H$-free graphs is the set of all graphs which do not contain $H$ as an induced subgraph.
A `natural' class of graphs is the family of $H$-free graphs for a specific graph $H$. Here we give the answer to the Royle's question for $H$-free families of graphs.
The same result is given for graphs with CDRP.

For a graph $H$, we denote the family of  $H$-free graphs by $\f(H)$.
We say that $\f(H)$ satisfies DRP (or CDRP) if any $G\in\f(H)$ does.
Also by $H\sg G$, we mean that $H$ is an induced subgraph of $G$.

The following result shows that if an $H$-free family of graphs satisfies DRP or CDRP it must be contained in the family of cographs.

\begin{thm}\label{RoyThm} Let $\f(H)$ be the family of $H$-free graphs.
\begin{itemize}
\item[\rm(i)]  $\f(H)$ satisfies DRP if and only if $H$ is an induced subgraph of $P_4$.
\item[\rm(ii)]  $\f(H)$ satisfies CDRP if and only if $H$ is an induced subgraph of $P_4$.
\end{itemize}

\end{thm}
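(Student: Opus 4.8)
The plan is to prove both equivalences by one scheme: the ``if'' direction is immediate from Royle's lemmas, and the ``only if'' direction is handled by exhibiting, for every $H$ with $H\not\sg P_4$, an explicit $H$-free graph that violates the relevant property. First I would record the exact meaning of a violation: a graph $G$ fails DRP precisely when $G$ has no duplication yet $A(G)$ is singular, and fails CDRP precisely when $G$ has no coduplication yet $-1$ is an eigenvalue of $A(G)$.

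For the ``if'' direction, suppose $H\sg P_4$. Then every $G\in\f(H)$ is $P_4$-free, since $P_4\sg G$ would give $H\sg P_4\sg G$, contradicting $G\in\f(H)$. Thus $\f(H)$ is a subfamily of the cographs, and Lemma~\ref{roy0} (resp. Lemma~\ref{roy-1}) shows every cograph satisfies DRP (resp. CDRP): either it has a duplication (resp. coduplication), or, having none, it has full rank (resp. $A+I$ of full rank). This disposes of one direction of both (i) and (ii).

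For the ``only if'' direction I argue the contrapositive, and the conceptual step is a trichotomy for $H\not\sg P_4$. Since $\alpha(P_4)=\omega(P_4)=2$, if $K_3\not\sg H$ and $3K_1\not\sg H$ then $\omega(H),\alpha(H)\le2$, whence $R(3,3)=6$ forces $|V(H)|\le5$; enumerating the graphs on at most five vertices with $\omega,\alpha\le2$ gives exactly $K_1,K_2,2K_1,P_3,K_1\cup K_2,P_4,C_4,2K_2,C_5$, all but the last three being induced subgraphs of $P_4$. Hence $H\not\sg P_4$ falls into one of: $K_3\sg H$, $3K_1\sg H$, or $H\in\{C_4,2K_2,C_5\}$. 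Now I exhibit witnesses. For DRP I use only two graphs. The path $P_5$ is triangle-free, $C_4$-free and $C_5$-free, has no duplication, and is singular (its spectrum is $\{\pm\sqrt3,\pm1,0\}$), so it lies in $\f(H)$ and violates DRP when $K_3\sg H$ or $H\in\{C_4,C_5\}$. For the remaining cases I use $\overline{C_6}$ (the triangular prism), which has $\alpha=2$ hence is $3K_1$-free, is $2K_2$-free because $C_4\not\sg C_6$, has no duplication, and is singular: since $C_n$ is $2$-regular, the complement eigenvalues are $n-3$ and $-1-2\cos(2\pi j/n)$, which yields $0$ for $n=6$. For CDRP I again use $P_5$ (its spectrum contains $-1$ and it has no coduplication) for $K_3\sg H$ and $H\in\{C_4,C_5\}$, and I replace $\overline{C_6}$ by $\overline{C_8}$ for $3K_1\sg H$ and $H=2K_2$: the same formula gives $-1$ in the spectrum (at $j=2$), while $\overline{C_8}$ is $3K_1$-free, $2K_2$-free, and has no coduplication.

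The main obstacle is the ``only if'' direction, and within it the three sporadic graphs $C_4,2K_2,C_5$: they satisfy $\alpha=\omega=2$, so no crude independence/clique argument excludes them, and one must verify by hand that $P_5$ and a complemented even cycle simultaneously avoid them as induced subgraphs and carry the required eigenvalue. Checking the eigenvalue $0$ (resp. $-1$) together with the absence of (co)duplications for these explicit graphs is routine, but it is where the argument has content; the reduction to the three cases via $R(3,3)=6$ is precisely what makes this finite verification suffice.
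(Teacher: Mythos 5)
Your proof is correct, and it reaches the same two witnesses for half the cases ($P_5$ for the triangle/$C_4$/$C_5$ cases) but organizes the necessity direction differently from the paper. The paper proceeds by a case analysis on the number of vertices of $H$: it settles graphs on at most four vertices by hand, and disposes of all $H$ with five or more vertices by a hereditary argument --- any five-vertex induced subgraph $H'$ of $H$ would have all its four-vertex induced subgraphs isomorphic to $P_4$, forcing $H'=C_5$, which is then killed by $P_5$. You instead invoke $R(3,3)=6$ to obtain the trichotomy $K_3\sg H$, $\overline{K_3}\sg H$, or $H\in\{C_4,2K_2,C_5\}$, which handles all large $H$ in a single stroke; this is a genuinely different (and arguably slicker) reduction, at the cost of importing a Ramsey fact the paper does not need. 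The witnesses for the $\overline{K_3}$ and $2K_2$ cases also differ: the paper exhibits ad hoc small graphs (a five-vertex graph, which is in fact $\overline{P_5}$, with an explicit $0$-eigenvector for DRP, and a six-vertex graph with an explicit $-1$-eigenvector for CDRP), whereas you use $\overline{C_6}$ and $\overline{C_8}$, whose spectra come for free from the complement-of-a-regular-graph formula --- so no eigenvector has to be displayed, only the routine absence of (co)duplications checked. Both routes are complete; yours trades the paper's self-contained minimal examples for a more systematic case reduction and closed-form spectral computations.
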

\begin{proof}{(i) If $H\sg P_4$, then $\f(H)\subseteq\f(P_4)$, and so by Lemma~\ref{roy0}, $\f(H)$ satisfies DRP, showing the `sufficiency'.
 For the `necessity,' assume that for a given graph $H$,
$\f(H)$ satisfies DRP. We show that $H\sg P_4$.

If $H$ has one or two vertices, then $H\sg P_4$, and we are done.
Let $H$ have three vertices. If $H=P_3$ or $H=P_2\cup P_1$, then $H\sg P_4$. We show that it is impossible that $H=K_3$ or $\overline{K_3}$.
Note that $P_5\in\f(K_3)$ and it does not satisfy DRP as it has no duplications but has a 0 eigenvalue (its $0$-eigenvector is illustrated in Figure~\ref{figP5}).
The graph depicted in Figure~\ref{figDRP} belongs to $\f(\overline{K_3})$ but it does not satisfies DRP; it has no duplications and has a 0 eigenvalue (its $0$-eigenvector indicated in the picture).
Hence the assertion follows for three-vertex graphs.
\begin{figure}\centering
\begin{tikzpicture}
\draw [line width=.5pt] (-2,0)-- (2,0);
\draw [fill=black] (0,0) circle (2.5pt);
\draw (0,0.3) node {$-1$};
\draw (0,-0.3) node {$0$};
\draw [fill=black] (-1,0) circle (2.5pt);
\draw (-1,0.3) node {$0$};
\draw (-1,-0.3) node {$-1$};
\draw [fill=black] (-2,0) circle (2.5pt);
\draw(-2,0.3) node {$1$};
\draw(-2,-0.3) node {$1$};
\draw [fill=black] (1,0) circle (2.5pt);
\draw (1,0.3) node {$0$};
\draw (1,-0.3) node {$-1$};
\draw [fill=black] (2,0) circle (2.5pt);
\draw (2,0.3) node {$1$};
\draw (2,-0.3) node {$1$};
\end{tikzpicture}
\caption{ $P_5$ with its $0$-eigenvector shown on the above and $-1$-eigenvector on the below}
\label{figP5} \end{figure}
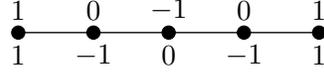

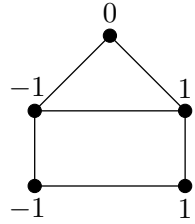
\begin{figure}\centering
\begin{tikzpicture}
\draw [line width=.5pt] (-1,1)-- (1,1);
\draw [line width=.5pt] (0,2)-- (1,1);
\draw [line width=.5pt] (0,2)-- (-1,1);
\draw [line width=.5pt] (-1,1)-- (-1,0);
\draw [line width=.5pt] (1,0)-- (-1,0);
\draw [line width=.5pt] (1,1)-- (1,0);
\draw [fill=black] (-1,0) circle (2.5pt);
\draw  (-1.1,-0.3) node {$-1$};
\draw [fill=black] (-1,1) circle (2.5pt);
\draw  (-1.1,1.3) node {$-1$};
\draw [fill=black] (1,1) circle (2.5pt);
\draw  (1,1.3) node {$1$};
\draw [fill=black] (1,0) circle (2.5pt);
\draw  (1,-0.3) node {$1$};
\draw [fill=black] (0,2) circle (2.5pt);
\draw  (0,2.3) node {$0$};
\end{tikzpicture}
\caption{ A graph in $\f(\overline{K_3})\cap\f(P_5)\cap\f(2K_2)$ and its $0$-eigenvector }
\label{figDRP} \end{figure}

Suppose that $H$ has four vertices. If $H=P_4$, there is nothing to prove. If $K_3\sg H$ or $\overline{K_3}\sg H$, then $\f(K_3)\subseteq\f(H)$ or
 $\f(\overline{K_3})\subseteq\f(H)$ and so $\f(H)$ does not satisfies DRP.
It remains to consider 4-vertex bipartite graphs  with no 3-coclique. Besides $P_4$, there are only two such graphs, namely $K_{2,2}$ and $2K_2$.
But $P_5$ is a $K_{2,2}$-free graph not satisfying DRP and the graph of Figure~\ref{figDRP} is a $2K_2$-free graph not satisfying DRP. Therefore, the assertion holds for four-vertex graphs.

Now let $H$ have five or more vertices. Let $H'$ be a five-vertex graph with $H'\sg H$.
As $\f(H')\subseteq\f(H)$, the family $\f(H')$ also satisfies DRP.
By the argument for four-vertex graphs, $P_4$ is the only four-vertex graph with $\f(P_4)$ satisfying DRP.
It follows that all four-vertex induced subgraphs of $H'$ must be isomorphic to $P_4$.
There is a unique graph $H'$ with this property, namely the 5-cycle $C_5$.
However, $P_5\in\f(C_5)$ and it does not satisfy DRP. It turns out that for no graph $H$ with five or more vertices, $\f(H)$ satisfies DRP. This completes the proof.

(ii) Similar to the proof of (i), the `sufficiency' follows from Lemma~\ref{roy-1}.  For the `necessity,' assume that
$\f(H)$ satisfies CDRP. There is nothing to prove if $H$ has one or two vertices. Let $H$ have three vertices. If $H=P_3$ or $H=P_2\cup P_1$, then $H\sg P_4$.
Note that $P_5$ does not satisfies CDRP (as shown in Figure~\ref{figP5}) but belongs to $\f(K_3)$ and the graph of Figure~\ref{figCDRP} does not satisfies CDRP but belongs to $\f(\overline{K_3})$.
Hence the assertion follows for three-vertex graphs.
\begin{figure}\centering
\begin{tikzpicture}
\draw [line width=.5pt] (0,1)-- (0,-1);
\draw [line width=.5pt] (-3,0)-- (3,0);
\draw [line width=.5pt] (0,1)-- (-3,0);
\draw [line width=.5pt] (-3,0)-- (0,-1);
\draw [line width=.5pt] (3,0)-- (0,-1);
\draw [line width=.5pt] (0,1)-- (3,0);
\draw [line width=.5pt] (0,1)-- (1,0);
\draw [line width=.5pt] (1,0)-- (0,-1);
\draw [line width=.5pt] (0,-1)-- (-1,0);
\draw [fill=black] (0,1) circle (2.5pt);
\draw (0,1.35) node {$1$};
\draw [fill=black] (0,-1) circle (2.5pt);
\draw (0,-1.3) node {$-1$};
\draw [fill=black] (1,0) circle (2.5pt);
\draw (1,0.3) node {$1$};
\draw [fill=black] (-1,0) circle (2.5pt);
\draw (-1,0.3) node {$0$};
\draw [fill=black] (3,0) circle (2.5pt);
\draw (3,0.3) node {$-1$};
\draw [fill=black] (-3,0) circle (2.5pt);
\draw (-3,0.3) node {$0$};
\end{tikzpicture}
\caption{A graph in $\f(\overline{K_3})\cap\f(2K_2)$ and its $-1$-eigenvector}\label{figCDRP}
\end{figure}
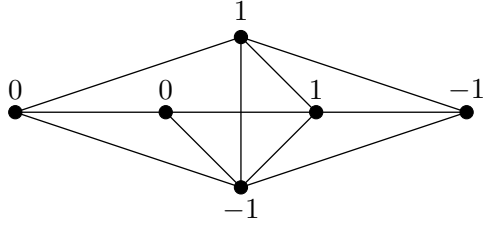

Suppose that $H$ has four vertices. If $H=P_4$, there is nothing to prove. If $K_3\sg H$ or $\overline{K_3}\sg H$, then $\f(K_3)\subseteq\f(H)$ or
 $\f(\overline{K_3})\subseteq\f(H)$ and so $\f(H)$ does not satisfies CDRP.
It remains to consider $H=K_{2,2}$ and $2K_2$.
But $P_5$ is a $K_{2,2}$-free graph not satisfying CDRP and the graph of Figure~\ref{figCDRP} is a $2K_2$-free graph not satisfying CDRP. Therefore, the assertion holds for four-vertex graphs.

Now let $H$ have five or more vertices. Let $H'$ be a five-vertex graph with $H'\sg H$.
As $\f(H')\subseteq\f(H)$, the family $\f(H')$ also satisfies CDRP.
By the argument for four-vertex graphs, we have that all four-vertex induced subgraphs of $H'$ must be isomorphic to $P_4$ and so $H'=C_5$.
However, $P_5\in\f(C_5)$ and it does not satisfy CDRP. It turns out that for no graph $H$ with five or more vertices, $\f(H)$ satisfies CDRP,  completing the proof.
}\end{proof}

Any pair of duplicate vertices $u,v$ in a graph $G$ give rise to a null-vector of $A(G)$ of weight two (the vector whose components corresponding to $u,v$ are $1,-1$,  and zero elsewhere). Conversely, any null-vector $\x$
of $A(G)$ of weight two comes from a pair of duplicate vertices.
To see this, suppose that $x_u$ and $x_v$ are the two non-zero components of $\x$. As $A(G)$ is a $0,1$-matrix, we must have $x_u=-x_v$. It turns out that the rows of  $A(G)$ corresponding to
$u$ and $v$ are identical which means that $u$ and $v$ are duplicates.
Hence, we observe that any null-vector of $A(G)$ of weight two corresponds with a pair of duplicate vertices in $G$.
 Similarly, null-vectors
of $A(G)+I$ of weight two correspond to  pairs of coduplicate vertices.
In \cite{sa} (see also \cite{gh}) it is shown that if $G$ is a cograph, then the null-space of $A(G)$ (resp. $A(G)+I$) has a basis consisting of the weight-two null-vectors  corresponding to duplicate (resp. coduplicate) pairs. Hence the following can be deduced from Theorem~\ref{RoyThm}.

\begin{cor}  Let $\f(H)$ be the family of $H$-free graphs.
\begin{itemize}
\item[\rm(i)] For all graphs $G\in\f(H)$ the null-space of $A(G)$ has a basis consisting of vectors of weight two if and only if $H$ is an induced subgraph of $P_4$.
\item[\rm(ii)] For all graphs $G\in\f(H)$ the null-space of $A(G)+I$ has a basis consisting of vectors of weight two if and only if $H$ is an induced subgraph of $P_4$.
   \end{itemize}
\end{cor}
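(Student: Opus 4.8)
The plan is to deduce the corollary directly from Theorem~\ref{RoyThm}, using the correspondence recalled in the paragraph preceding the statement between weight-two null-vectors and (co)duplicate pairs. I would prove part~(i) in full and then observe that part~(ii) is entirely parallel, replacing $A(G)$ by $A(G)+I$, duplicates by coduplicates, DRP by CDRP, Lemma~\ref{roy0} by Lemma~\ref{roy-1}, and Theorem~\ref{RoyThm}(i) by Theorem~\ref{RoyThm}(ii).

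For the `if' direction I would argue directly through the cograph structure rather than through DRP. If $H\sg P_4$, then $\f(H)\subseteq\f(P_4)$, so every $G\in\f(H)$ is a cograph; by the result of \cite{sa} quoted above, the null-space of $A(G)$ of any cograph has a basis consisting of the weight-two null-vectors arising from duplicate pairs, and the desired property follows for every $G\in\f(H)$.

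For the `only if' direction the key step is the observation that the weight-two-basis property of a \emph{single} graph $G$ already forces $G$ to satisfy DRP. I would split into two cases. If the null-space of $A(G)$ is trivial, then $A(G)$ has full rank, so DRP holds; otherwise the null-space is nonzero, so any weight-two basis contains a nonzero vector, which by the correspondence is the weight-two null-vector of a pair of duplicate vertices, whence $G$ has a duplication and DRP again holds. Consequently, if every $G\in\f(H)$ enjoys the weight-two-basis property, then $\f(H)$ satisfies DRP, and Theorem~\ref{RoyThm}(i) forces $H\sg P_4$.

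I do not expect a serious obstacle, since both ingredients are already in hand. The one point meriting care is the logical routing of the two directions. The sufficiency must draw on the cograph result of \cite{sa} and \emph{cannot} be obtained from DRP alone, because DRP only guarantees the existence of a single duplicate pair (equivalently, one weight-two null-vector) and not that the entire null-space is spanned by such vectors. The necessity, by contrast, uses only the easy implication that a weight-two basis yields DRP, together with the harmless boundary case of a trivial null-space, for which the empty basis serves.
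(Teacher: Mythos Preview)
Your proposal is correct and follows essentially the same approach as the paper: the `if' direction invokes the result of \cite{sa} on cographs, and the `only if' direction reduces to Theorem~\ref{RoyThm} via the observation that a weight-two basis forces DRP (resp.\ CDRP). Your explicit separation of the two directions and the remark that DRP alone would not suffice for the `if' part in fact sharpen what the paper leaves implicit.
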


\section*{Acknowledgments}
The research of the author was in part supported by a grant from IPM (No. 95050114).


\begin{thebibliography}{}
\bibitem{as} M. Andeli\'c and S.K. Simi\'c, Some notes on the threshold graphs, {\em Discrete Math.} {\bf310} (2010), 2241--2248.
 \bibitem{bss}  T. Biyiko\u glu, S.K. Simi\'c, and  Z. Stani\'c,  Some notes on spectra of cographs, {\em Ars Combin.} {\bf100} (2011), 421--434.
\bibitem{bls} A. Brandst\"adt, V.B. Le, and J.P. Spinrad, {\em Graph Classes: A Survey},
 Society for Industrial and Applied Mathematics (SIAM), Philadelphia, PA, 1999.
 \bibitem{bh} A.E. Brouwer and W.H. Haemers, {\em Spectra of Graphs}, Springer, New York, 2012.
\bibitem{chy}  G.J. Chang, L.-H. Huang, and  H.-G. Yeh, On the rank of a cograph, {\em Linear Algebra Appl.} {\bf429} (2008), 601--605.
\bibitem{ch} V. Chv\'atal and P.L. Hammer, Aggregation of inequalities in integer programming, {\em Ann. Discrete Math.} {\bf1} (1977), 145--162.
\bibitem{clb} D.G. Corneil, H. Lerchs, and L.S. Burlingham, Complement reducible graphs, {\em Discrete Appl. Math.} {\bf3} (1981), 163--174.
\bibitem{crs} D. Cvetkovi\'c, P. Rowlinson, and S. Simi\'c, {\em Spectral Generalizations of Line Graphs, On Graphs with Least Eigenvalue $-2$}, London Math. Society Lecture Note Series, Cambridge Univ. Press, Cambridge, 2004.
\bibitem{eh} F. Esser and F. Harary, On the spectrum of a complete multipartite graph, {\em European J. Combin.} {\bf1} (1980), 211--218.
\bibitem{fh} S. F\"oldes and P.L. Hammer, The Dilworth number of a graph, {\em Ann. Discrete Math.} {\bf2} (1978), 211--219.
\bibitem{gkbc} J. Gagneur, R. Krause, T. Bouwmeester, and G. Casari,  Modular
decomposition of protein-protein interaction networks,  {\em Genome Biology} {\bf5} (2004), R57.
\bibitem{g} T. Gallai, Transitiv orientierbarer graphen, {\em Acta Math. Acad. Sci. Hungar} {\bf18} (1967),  25--66.
\bibitem{gh} E. Ghorbani, Spectral properties of cographs and $P_5$-free graphs, {\em Linear Multilinear Algebra}, to appear.
\bibitem{ha} F. Harary, The structure of threshold graphs, {\em Riv. Mat. Sci. Econom. Social.} {\bf2} (1979), 169--172.
\bibitem{htw} L.-H. Huang, B.-S. Tam, and S.-H. Wu,  Graphs whose adjacency matrices have rank equal to the number of distinct nonzero rows, {\em Linear Algebra Appl.} {\bf438} (2013), 4008--4040.
\bibitem{k1} A.K. Kelmans, The number of trees in a graph. I, {\em Autom. Remote Control} {\bf26} (1965), 2118--2129.
\bibitem{k2} A.K. Kelmans, The number of trees in a graph. II, {\em Autom. Remote Control} {\bf27} (1966), 233--241.
\bibitem{jtt0} D.P. Jacobs, V. Trevisan, and F. Tura, Eigenvalue location in threshold graphs, {\em Linear Algebra Appl.} {\bf439} (2013), 2762--2773.
\bibitem{jtt1} D.P. Jacobs, V. Trevisan, and F. Tura, Eigenvalue location in cographs, {\em Discrete Appl. Math.} {\bf245} (2018), 220--235.

\bibitem{j} H.A. Jung,  On a class of posets and the corresponding comparability graphs, {\em J. Combin. Theory Series B} {\bf24} (1978), 125--133.
\bibitem{l} H. Lerchs,  On cliques and kernels, Technical Report, Dept. of Comp. Sci., Univ. of Toronto, 1971.
\bibitem{lz} J. Liu and H.S. Zhou, Dominating subgraphs in graphs with some forbidden structures, {\em Discrete Math.} {\bf135} (1994), 163--168.
\bibitem{mp} N.V.R. Mahadev and U.N. Peled, {\em Threshold Graphs and Related Topics}, Annals of Discrete Mathematics, North–Holland Publishing Co., Amsterdam, 1995.
\bibitem{ma} P.   Manca, On a simple characterisation of threshold graphs, {\em Riv. Mat. Sci. Econom. Social.} {\bf2} (1979), 3--8.
\bibitem{mt} A. Mohammadian and V. Trevisan, Some spectral properties of cographs, {\em Discrete Math.} {\bf339} (2016), 1261--1264.
\bibitem{noz} K. Nakano, S. Olariu,  and A. Zomaya,  A time-optimal solution for the path
cover problem on cographs, {\em Theoret. Comput. Sci.} {\bf290} (2003), 1541--1556.
\bibitem{roy}  G.F. Royle,  The rank of a cograph,  {\em Electron. J. Combin.} {\bf 10} (2003), Note 11.
\bibitem{sa}   T. Sander,  On certain eigenspaces of cographs, {\em  Electron. J. Combin.} {\bf15} (2008), Research Paper 140.
\bibitem{se} D. Seinsche, On a property of the class of $n$-colorable graphs, {\em J. Combin. Theory Series B} {\bf16} (1974),  191--193.
\bibitem{si} T. Sillke, Graphs with maximal rank, {\footnotesize{\tt https://www.math.uni-bielefeld.de/\~{}sillke/PROBLEMS/cograph}}.
\bibitem{su} D.P. Sumner, Dacey graphs, {\em J. Austral. Math. Soc.} {\bf18}  (1974),  492--502.

\end{thebibliography}
\end{document}